\documentclass[11pt,reqno]{amsart}

\usepackage[T1]{fontenc}
\usepackage[utf8]{inputenc}
\usepackage{lmodern}
\usepackage{microtype}
\usepackage{amsmath,amssymb,amscd,mathtools}
\usepackage{xcolor}
\usepackage[colorlinks=true,linkcolor=blue!55!black,
  citecolor=blue!55!black,urlcolor=blue!55!black]{hyperref}
\hypersetup{
  pdftitle={Day Convolution and Homotopy Groups on the Pointed Homotopy
    Category of CW Complexes},
  pdfauthor={A. S. Yedathore}}

\numberwithin{equation}{section}

\newtheorem{theorem}{Theorem}[section]
\newtheorem*{theoremA}{Theorem A}
\newtheorem{proposition}[theorem]{Proposition}
\newtheorem{lemma}[theorem]{Lemma}
\newtheorem{corollary}[theorem]{Corollary}
\newtheorem{remark}[theorem]{Remark}
\theoremstyle{definition}
\newtheorem{definition}[theorem]{Definition}

\newcommand{\Z}{\mathbb Z}
\newcommand{\Ab}{\mathbf{Ab}}
\newcommand{\Set}{\mathbf{Set}}
\newcommand{\CW}{\mathbf{CW}_*}
\newcommand{\hCW}{h\mathbf{CW}_*}
\newcommand{\cC}{\mathcal C}
\newcommand{\cF}{\mathcal F}
\newcommand{\id}{\operatorname{id}}
\newcommand{\Hom}{\operatorname{Hom}}
\newcommand{\Nat}{\operatorname{Nat}}
\newcommand{\FreeAb}[1]{\Z\{#1\}}
\newcommand{\sm}{\wedge}
\newcommand{\Day}{\mathbin{\star}}
\newcommand{\extsm}{\mathbin{\boxtimes}}

\title[Day Convolution and Homotopy Groups]
  {Day Convolution and Homotopy Groups on the Pointed Homotopy Category of CW Complexes}

\author{Abhiram Srinivas Yedathore}
\address{Department of Physical Sciences, Indian Institute of Science
  Education and Research Mohali, Knowledge City, Sector 81, SAS Nagar,
  Manauli PO 140306, Punjab, India}
\email{ms22093@iisermohali.ac.in}

\subjclass[2020]{Primary 55Q05; Secondary 18M05, 18D20, 18A25}
\keywords{Day convolution, enriched category, pointed homotopy category,
  homotopy groups, smash product, external smash product}

\begin{document}

\begin{abstract}
We freely enrich the pointed homotopy category of CW complexes over
abelian groups and equip the resulting functor category with the enriched
Day convolution induced by the smash product.  After identifying the
convolution unit and recording the coherence maps, we compute the
convolution of the homotopy-group functors.  For integers $n,m\geq 2$ we
construct a natural isomorphism
\[
  \pi_n\Day\pi_m\cong\pi_{n+m}
\]
induced by the external smash product; equivalently, the external smash
product is the universal natural pairing of homotopy groups.  Under this
identification, the symmetry interchanging the two factors acts on
$\pi_{n+m}$ by multiplication by $(-1)^{nm}$.
\end{abstract}

\maketitle

\section{Introduction}\label{sec:intro}

Day convolution~\cite{Day1970} extends a monoidal structure on a category
$\cC$ to the category of functors out of $\cC$; it is, for instance, the
construction behind the smash products of diagram spectra~\cite{MMSS}.
In this paper we apply it to the homotopy-group functors on the pointed
homotopy category of CW complexes, with the monoidal structure given by
the smash product.

The hom-sets $[X,Y]_*$ of the pointed homotopy category $\hCW$ carry no
natural addition, so we first enrich $\hCW$ freely over abelian groups.
This gives an $\Ab$-category $\cC=\Z[\hCW]$ with the same objects, in
which $\cC(X,Y)$ is the free abelian group on $[X,Y]_*$; additive
functors $\cC\to\Ab$ are the same as ordinary functors $\hCW\to\Ab$.  The
smash product makes $\cC$ symmetric monoidal, and Day convolution makes
the category $\cF$ of additive functors $\cC\to\Ab$ symmetric monoidal as
well, with product
\[
 (F\Day G)(X)=\int^{A,B\in\cC}\cC(A\sm B,X)\otimes F(A)\otimes G(B)
\]
and unit $J=\cC(S^0,-)\cong\FreeAb{\pi_0(-)}$, which is not the constant
functor $\Z$.  Each homotopy-group functor $\pi_n$ is an object of $\cF$.

For $x\in\pi_n(A)$ and $y\in\pi_m(B)$, let
$x\extsm y\in\pi_{n+m}(A\sm B)$ denote their external smash product
(Lemma~\ref{lem:external-pairing}).  Our main result is the following;
it combines Theorem~\ref{thm:main} and
Corollary~\ref{cor:graded-symmetry}.

\begin{theoremA}
Let $n,m\geq2$.  There is an isomorphism
$\mu_{n,m}\colon\pi_n\Day\pi_m\to\pi_{n+m}$ in $\cF$, given on
generators by $[e_{[u]}\otimes x\otimes y]_{A,B}\mapsto u_*(x\extsm y)$.
Under these isomorphisms, the symmetry of Day convolution interchanging
the two factors acts on $\pi_{n+m}$ by multiplication by $(-1)^{nm}$.
\end{theoremA}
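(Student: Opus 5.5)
The plan is to use the universal property of Day convolution. Morphisms $\pi_n\Day\pi_m\to H$ in $\cF$ correspond bijectively to families of homomorphisms $\pi_n(A)\otimes\pi_m(B)\to H(A\sm B)$ that are natural in $A$ and $B$. We then show that the external smash product $\extsm$ is the universal such pairing into $\pi_{n+m}(-\sm-)$.

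\textbf{Step 1: well-definedness.} The pairing $x\otimes y\mapsto x\extsm y$ of Lemma~\ref{lem:external-pairing} is bilinear for $n,m\geq1$. It is also natural in pointed maps $A\to A'$ and $B\to B'$. Hence $[e_{[u]}\otimes x\otimes y]_{A,B}\mapsto u_*(x\extsm y)$ respects the coend relations, and it is natural in $X$. This defines $\mu_{n,m}$.

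\textbf{Step 2: surjectivity.} Take $z\in\pi_{n+m}(X)$, represented by $u\colon S^{n+m}=S^n\sm S^m\to X$. Let $\iota_n\in\pi_n(S^n)$ and $\iota_m\in\pi_m(S^m)$ be the fundamental classes, so that $\iota_n\extsm\iota_m=\pm\iota_{n+m}$. Then $z=\pm\mu[e_{[u]}\otimes\iota_n\otimes\iota_m]$.

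\textbf{Step 3: injectivity.} This is the main obstacle. By the coend relations, every generator reduces to one supported on spheres:
\[
[e_{[u]}\otimes x\otimes y]_{A,B}=[e_{[u\circ(x\sm y)]}\otimes\iota_n\otimes\iota_m]_{S^n,S^m}.
\]
Here we view $x\colon S^n\to A$ and $y\colon S^m\to B$, and $x_*\iota_n=x$. So every element of $(\pi_n\Day\pi_m)(X)$ has the form $[\xi\otimes\iota_n\otimes\iota_m]$ with $\xi\in\FreeAb{[S^{n+m},X]_*}$. We must show that the map $\FreeAb{[S^{n+m},X]_*}\to\pi_{n+m}(X)$, $\xi\mapsto\xi_*(\iota_n\extsm\iota_m)$, has kernel killed by the relations. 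That kernel is generated by $e_{[f+g]}-e_{[f]}-e_{[g]}$.

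We kill these using additivity in one variable. Here $f+g$ is the composite $S^n\sm S^m\to(S^n\vee S^n)\sm S^m\xrightarrow{(f,g)}X$, where the first map is the pinch on $S^n$. The pinch class $c=\iota'+\iota''\in\pi_n(S^n\vee S^n)$ gives
\[
[e_{[(f,g)]}\otimes c\otimes\iota_m]=[e_{[(f,g)]}\otimes\iota'\otimes\iota_m]+[e_{[(f,g)]}\otimes\iota''\otimes\iota_m],
\]
using additivity of $\pi_n$ in $\cF$. Applying the coend relation along the map $S^n\to S^n\vee S^n$ on each side turns the left term into $e_{[f+g]}$ and the right terms into $e_{[f]}$ and $e_{[g]}$. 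The condition $n\geq 2$ (indeed $n\geq1$ with $S^n$ a cogroup) makes this work. The hypothesis $m\geq2$ enters symmetrically, and ensures abelian groups when comparing with the other factor. Identifying $\pi_{n+m}(X)$ with $\FreeAb{[S^{n+m},X]_*}$ modulo exactly these relations uses that the group is abelian ($n+m\geq2$). This completes injectivity.

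\textbf{Step 4: symmetry.} The symmetry sends $[e_{[u]}\otimes x\otimes y]_{A,B}$ to $[e_{[u\circ\tau]}\otimes y\otimes x]_{B,A}$, where $\tau\colon B\sm A\to A\sm B$ is the twist. Under $\mu$, this becomes $u_*\tau_*(y\extsm x)$. Then $\tau_*(y\extsm x)=\tau_{S^m,S^n}$ applied to the product, which we compare with $x\extsm y$. The twist $S^m\sm S^n\to S^n\sm S^m$ has degree $(-1)^{nm}$. Hence the symmetry acts as multiplication by $(-1)^{nm}$ on $\pi_{n+m}$, and by Step 2 it suffices to check this on the generators.
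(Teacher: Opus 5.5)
Your proposal is correct and follows essentially the paper's route. The paper packages your Steps 2--3 as an explicit inverse $\nu_{n,m}(\gamma)=[e_{[\bar\gamma]}\otimes\iota_n\otimes\iota_m]_{S^n,S^m}$, whose additivity is proved by exactly your balancing computation on $S^n\vee S^n$ with $[p_n]=[i_1]+[i_2]$. It treats the symmetry as in your Step 4, via naturality of the twist and the degree $(-1)^{nm}$ of $t_{m,n}=\kappa_{n,m}\circ\tau_{S^m,S^n}\circ\kappa_{m,n}^{-1}$. You assert without justification that $(f,g)\circ(p_n\sm1)$ represents $f+g$ once $S^n\sm S^m$ is identified with $S^{n+m}$ via $\kappa_{n,m}$; this is what the bilinearity in Lemma~\ref{lem:external-pairing} supplies. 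With that same identification, the sign in your Step 2 is $+$.
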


Theorem~A has a more intrinsic formulation.  By the universal property of
the coend and the Yoneda lemma, morphisms $\pi_n\Day\pi_m\to G$ in $\cF$
correspond to families of homomorphisms
$\pi_n(A)\otimes\pi_m(B)\to G(A\sm B)$ natural in $A$ and $B$, and
$\mu_{n,m}$ corresponds to the external smash product.  Theorem~A
therefore says that $\extsm$ is the universal natural pairing of homotopy
groups: for every $G\in\cF$, composition with $\extsm$ is an isomorphism
\[
 \cF(\pi_{n+m},G)\xrightarrow{\ \cong\ }
 \Nat_{A,B}\bigl(\pi_n(A)\otimes\pi_m(B),\,G(A\sm B)\bigr),
 \qquad \theta\longmapsto\theta_{A\sm B}\circ\extsm.
\]

The proof is short, and the theorem is close to formal, but it does not
follow formally from the representable case.  For the linearized
representables $R_n=\cC(S^n,-)$, the Yoneda lemma gives
$R_n\Day R_m\cong R_{n+m}$ (Proposition~\ref{prop:rep-convolution}), and
$\pi_n$ is a quotient of $R_n$.  Since Day convolution preserves colimits
in each variable, $\pi_n\Day\pi_m$ is a quotient of $R_{n+m}$; the
content of Theorem~A is that it is the right quotient, namely that the
additivity relations defining $\pi_n$ and $\pi_m$ generate exactly those
defining $\pi_{n+m}$.  This cannot be deduced from Yoneda alone: $\pi_n$
is not representable, nor even a retract of a representable functor
(Remark~\ref{rem:not-representable}).  In the proof, coend balancing
moves every element onto the universal example
$(S^n,S^m,\iota_n,\iota_m)$, where $\iota_k\in\pi_k(S^k)$ is the class of
the identity, and bilinearity of the external smash
product matches the relations on the two sides.
Corollary~\ref{cor:square} records the resulting comparison with the
representable case.

Sections~\ref{sec:enrichment}--\ref{sec:unit} and
Proposition~\ref{prop:rep-convolution} are standard: they are the free
$\Ab$-category on an ordinary category~\cite[\S2.5]{Kelly}, Day's theorem~\cite{Day1970}, and the convolution of representables.  We include
proofs, or sketches, for completeness and to fix notation.  The new
content is Theorem~A and its proof.

\emph{Outline.}  Section~\ref{sec:foundations} fixes conventions and
deals with size.  Section~\ref{sec:enrichment} constructs $\cC$,
Section~\ref{sec:coends} recalls coends and defines Day convolution, and
Section~\ref{sec:unit} identifies the unit and records the coherence
data.  Section~\ref{sec:representables} treats the linearized
representables and presents $\pi_n$ as a quotient of $R_n$.
Section~\ref{sec:main} proves the isomorphism of Theorem~A and compares
it with the representable case, and Section~\ref{sec:symmetry} computes
the symmetry.

\section{Foundations and conventions}\label{sec:foundations}

We work with two Grothendieck universes $\mathcal U\in\mathcal V$; this
is the universe-enlargement device of~\cite[\S2.6]{Kelly}.
Let $\CW^{\mathcal U}$ be the category of pointed $\mathcal U$-small CW
complexes whose basepoint is a $0$-cell.  We regard these spaces as
compactly generated weak Hausdorff spaces and form products and quotients
in that category.  Thus the compactly generated product gives the usual
product CW structure, the wedge is a subcomplex, and the quotient
\[
  X\sm Y=(X\times Y)/(X\vee Y)
\]
is again a pointed CW complex.  The basepoints are nondegenerate, and the
smash product preserves based homotopies.  It therefore descends to a
bifunctor on the pointed homotopy category
\[
  -\sm-\colon\hCW^{\mathcal U}\times\hCW^{\mathcal U}
  \longrightarrow\hCW^{\mathcal U}.
\]
With unit $S^0$, the standard associator $\alpha$, unitors $\lambda$ and
$\rho$, and twist $\tau$ make $\hCW^{\mathcal U}$ a symmetric monoidal
category; see~\cite[Chs.~5 and~8]{MayConcise} for the point-set
conventions.  We write $\tau_{X,Y}\colon X\sm Y\to Y\sm X$.

For $X,Y\in\hCW^{\mathcal U}$, write
\[
  [X,Y]_* = \hCW^{\mathcal U}(X,Y)
\]
for the set of based homotopy classes of based maps.  Each such set is
$\mathcal U$-small.  The collection of all $\mathcal U$-small CW complexes
is $\mathcal V$-small, so the coends below exist in $\Ab^{\mathcal V}$,
the category of $\mathcal V$-small abelian groups.  Accordingly, the
functors in the category $\cF$ of Section~\ref{sec:coends} take values in
$\Ab^{\mathcal V}$: even when $F$ and $G$ take $\mathcal U$-small values,
their convolution is a priori only $\mathcal V$-small.  We henceforth
suppress the universe superscripts and write $\Ab$ for $\Ab^{\mathcal V}$.

\section{The additive enrichment}\label{sec:enrichment}

The hom-set $[X,Y]_*$ has no canonical abelian-group structure for
arbitrary pointed CW complexes.  We therefore enrich the homotopy category
freely over abelian groups.

\begin{definition}
Let
\[
  \cC=\Z[\hCW].
\]
The objects of $\cC$ are those of $\hCW$, and
\[
  \cC(X,Y)=\FreeAb{[X,Y]_*}
\]
is the free abelian group on the set of based homotopy classes.  We denote
the basis element associated to a class $[f]$ by $e_{[f]}$.
\end{definition}

Composition is the homomorphism
\[
  \cC(Y,Z)\otimes_{\Z}\cC(X,Y)\longrightarrow\cC(X,Z)
\]
determined by
\[
  e_{[g]}\otimes e_{[f]}\longmapsto e_{[g\circ f]}.
\]
Equivalently,
\[
 \left(\sum_i n_i e_{[g_i]}\right)
 \circ
 \left(\sum_j m_j e_{[f_j]}\right)
 =\sum_{i,j}n_im_j e_{[g_i\circ f_j]}.
\]
The enriched identity is the homomorphism
\[
  \Z\longrightarrow\cC(X,X),\qquad 1\longmapsto e_{[\id_X]}.
\]
Associativity and the identity laws follow from the corresponding laws in
$\hCW$ and bilinearity.  To lighten notation, when a map $f$ appears
inside a composite in $\cC$ we often write $f$ for $e_{[f]}$; for
instance, $u\circ\tau_{B,A}$ means $u\circ e_{[\tau_{B,A}]}$ for
$u\in\cC(A\sm B,X)$.

The following is the universal property of the free $\Ab$-category on an
ordinary category~\cite[\S2.5]{Kelly}; we include the short proof.

\begin{proposition}\label{prop:linearization}
Restriction to basis morphisms induces an isomorphism between the category
of $\Ab$-enriched functors $\cC\to\Ab$ and the category of ordinary
functors $\hCW\to\Ab$.
\end{proposition}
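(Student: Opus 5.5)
The plan is to write down the two functors between the categories explicitly and check that they are mutually inverse. The first is restriction. Let $\iota\colon\hCW\to\cC$ be the identity on objects, sending $[f]\mapsto e_{[f]}$; it is an ordinary functor because composition and identities in $\cC$ are defined on basis elements by $e_{[g]}\circ e_{[f]}=e_{[g\circ f]}$ and $1\mapsto e_{[\id_X]}$. Restriction sends an additive functor $\Phi\colon\cC\to\Ab$ to $\Phi\circ\iota$, and an enriched natural transformation to the same family of components. An enriched natural transformation between $\Ab$-functors is just a family of maps natural with respect to all morphisms of $\cC$, so in particular with respect to the basis morphisms.

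The second is extension. Given an ordinary functor $F\colon\hCW\to\Ab$, define $\tilde F$ on objects by $\tilde F(X)=F(X)$. On hom-groups, $\tilde F_{X,Y}\colon\FreeAb{[X,Y]_*}\to\Hom(F(X),F(Y))$ is the unique homomorphism with $e_{[f]}\mapsto F([f])$, which exists by the universal property of the free abelian group. Functoriality of $\tilde F$ needs two checks:
\[
\tilde F(v\circ u)=\tilde F(v)\circ\tilde F(u)\qquad\text{and}\qquad \tilde F(e_{[\id_X]})=\id_{F(X)}.
\]
Both sides of the first identity are bilinear in $(v,u)$, because composition in $\cC$ is bilinear and composition in $\Ab$ is bilinear. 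It therefore suffices to check the identity on basis elements, where it reduces to functoriality of $F$. The identity condition is immediate. On morphisms, a natural transformation $F\Rightarrow G$ is sent to the same family of components. Naturality with respect to a general $u=\sum n_ie_{[f_i]}$ follows from naturality on each $[f_i]$ by additivity of composition in $\Ab$.

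It remains to check the round trips. By construction $\tilde F\circ\iota=F$. Conversely, the functor $\widetilde{\Phi\circ\iota}$ agrees with $\Phi$ on objects and on basis elements of each hom-group, and both are homomorphisms on $\FreeAb{[X,Y]_*}$, so they agree everywhere. On morphisms, both constructions leave the components unchanged, so they are inverse on natural transformations as well. This gives an isomorphism of categories, not merely an equivalence. The only point needing any care is the bilinearity reduction in the functoriality check, and it is routine. There is also no size issue: each hom-group is $\mathcal U$-small, and $\Ab=\Ab^{\mathcal V}$ contains all the values.
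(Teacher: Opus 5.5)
Your proposal is correct and follows essentially the same route as the paper: extend an ordinary functor linearly via the universal property of the free abelian group, check functoriality on basis elements, and observe that restriction and extension are mutually inverse on functors and on natural transformations. You spell out in more detail two checks the paper leaves implicit: the bilinearity reduction for composition, and naturality for general elements $\sum_i n_i e_{[f_i]}$.
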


\begin{proof}
If $F\colon\hCW\to\Ab$ is an ordinary functor, then for every $X,Y$ its
action on morphisms is a function
\[
  [X,Y]_*\longrightarrow\Hom_{\Z}(F(X),F(Y)).
\]
By the universal property of the free abelian group, it extends uniquely
to a homomorphism
\[
  \FreeAb{[X,Y]_*}\longrightarrow\Hom_{\Z}(F(X),F(Y)),
  \qquad
  \sum_i n_i e_{[f_i]}\longmapsto\sum_i n_iF([f_i]).
\]
Functoriality of $F$ gives compatibility with enriched composition and
identities.  Conversely, restricting an enriched functor to the basis
morphisms recovers an ordinary functor.  These constructions are mutually
inverse and also identify ordinary and enriched natural transformations.
\end{proof}

The smash product extends bilinearly to hom-objects by
\[
  \cC(X,X')\otimes\cC(Y,Y')\longrightarrow
  \cC(X\sm Y,X'\sm Y'),
  \qquad e_{[f]}\otimes e_{[g]}\longmapsto e_{[f\sm g]}.
\]
This is an enriched bifunctor.  Conceptually, $(\cC,\sm,S^0)$ is obtained
from $(\hCW,\sm,S^0)$ by change of base along the strong symmetric
monoidal free-abelian-group functor $\FreeAb{-}\colon\Set\to\Ab$~\cite[\S2.5]{Kelly}, whose structure isomorphisms are
\[
  \FreeAb{S\times T}\cong\FreeAb S\otimes_{\Z}\FreeAb T,
  \qquad \FreeAb{\{*\}}\cong\Z.
\]
The coherence maps of $\hCW$ give basis morphisms in $\cC$, so their
naturality and coherence equations remain valid after linearization.
Hence $(\cC,\sm,S^0)$ is an $\Ab$-enriched symmetric monoidal category,
with associator, unitors and twist $\alpha$, $\lambda$, $\rho$, $\tau$.

\section{Enriched coends and Day convolution}\label{sec:coends}

The material of this section and the next is well known and goes back to
Day~\cite{Day1970}; see~\cite[Chs.~2--3]{Kelly} and~\cite[Chs.~1--2]{Loregian} for enriched ends, coends, Fubini's theorem
and the Yoneda lemma.  We include proofs, or sketches, for completeness
and to fix notation.

The category $(\Ab,\otimes_{\Z},\Z)$ is cocomplete, and tensor product
preserves colimits separately in each variable because
\[
  M\otimes_{\Z}-\dashv\Hom_{\Z}(M,-).
\]

Let $T\colon\cC^{\mathrm{op}}\otimes\cC\to\Ab$ be enriched.  Its coend is
the coequalizer
\begin{equation}\label{eq:coend-coeq}
 \int^{A\in\cC}T(A,A)
 =\operatorname{coeq}\!\left(
 \bigoplus_{A,B}\cC(A,B)\otimes T(B,A)
 \rightrightarrows
 \bigoplus_A T(A,A)
 \right).
\end{equation}
For $f\in\cC(A,B)$ and $x\in T(B,A)$, the two arrows impose the balancing
relation
\[
  T(f,1_A)(x)\sim T(1_B,f)(x).
\]
If $[x]_A$ denotes the class of $x\in T(A,A)$, this becomes
\[
  [T(f,1_A)(x)]_A=[T(1_B,f)(x)]_B.
\]
The coend is universal among abelian groups receiving a dinatural family
of homomorphisms from the groups $T(A,A)$.

Let
\[
  \cF=[\cC,\Ab]_{\Ab}
\]
be the $\Ab$-enriched functor category.  By
Proposition~\ref{prop:linearization}, its objects can equivalently be
viewed as ordinary functors $\hCW\to\Ab$.

\begin{definition}\label{def:day}
For $F,G\in\cF$ and $X\in\cC$, define their Day convolution by
\begin{equation}\label{eq:day}
  (F\Day G)(X)=
  \int^{A,B\in\cC}
  \cC(A\sm B,X)\otimes F(A)\otimes G(B).
\end{equation}
\end{definition}

For $u\in\cC(A\sm B,X)$, $x\in F(A)$ and $y\in G(B)$, write
$[u\otimes x\otimes y]_{A,B}$ for the class of $u\otimes x\otimes y$ in
$(F\Day G)(X)$.  These classes are additive in each of $u$, $x$ and $y$,
so $(F\Day G)(X)$ is generated by the \emph{elementary representatives}
\[
  [e_{[u]}\otimes x\otimes y]_{A,B},
\]
where $u\colon A\sm B\to X$ is a map.  Concretely, $(F\Day G)(X)$ is the
quotient of $\bigoplus_{A,B}\cC(A\sm B,X)\otimes F(A)\otimes G(B)$ by the
subgroup generated by the balancing relations
\begin{align*}
 [e_{[u]}\otimes F(f)(x)\otimes y]_{A',B}
 &= [e_{[u\circ(f\sm1_B)]}\otimes x\otimes y]_{A,B},\\
 [e_{[u]}\otimes x\otimes G(g)(y)]_{A,B'}
 &= [e_{[u\circ(1_A\sm g)]}\otimes x\otimes y]_{A,B},
\end{align*}
for all maps $u$, $f\colon A\to A'$ and $g\colon B\to B'$ and all $x$,
$y$.  Indeed, the relations imposed by the coend are additive in the
morphism and compatible with composition, and every morphism of
$\cC\otimes\cC$ is a sum of composites
$(e_{[f]}\otimes1)\circ(1\otimes e_{[g]})$.

For $h\in\cC(X,X')$, postcomposition induces
\[
 (F\Day G)(h)[u\otimes x\otimes y]_{A,B}
 =[h\circ u\otimes x\otimes y]_{A,B}.
\]
This is well defined and makes $F\Day G$ an enriched functor.  Enriched
natural transformations $\eta\colon F\to F'$ and $\theta\colon G\to G'$
act by
\[
 (\eta\Day\theta)_X[u\otimes x\otimes y]_{A,B}
 =[u\otimes\eta_A(x)\otimes\theta_B(y)]_{A,B}.
\]
Thus $\Day\colon\cF\otimes\cF\to\cF$ is an enriched bifunctor.  This is
the covariant form of Day's construction~\cite{Day1970,Day1971}.

\section{The convolution unit and coherence}\label{sec:unit}

We use the enriched co-Yoneda isomorphism in both of its forms: for
enriched functors $F\colon\cC\to\Ab$ and
$P\colon\cC^{\mathrm{op}}\to\Ab$ and objects $X,Y\in\cC$,
\begin{align}
  \int^A\cC(A,X)\otimes F(A)&\xrightarrow{\ \cong\ }F(X),
  &[u\otimes x]_A&\longmapsto F(u)(x),\label{eq:coyoneda}\\
  \int^A P(A)\otimes\cC(Y,A)&\xrightarrow{\ \cong\ }P(Y),
  &[p\otimes v]_A&\longmapsto P(v)(p).\label{eq:coyoneda-contra}
\end{align}
The inverses send $x\in F(X)$ to $[1_X\otimes x]_X$ and $p\in P(Y)$ to
$[p\otimes1_Y]_Y$.  The second form is the first applied to
$\cC^{\mathrm{op}}$; it is the one needed whenever the variable of
integration sits in the target of a representable, as in $\cC(I,A)$ or
$\cC(S^n,A)$ below.

Let $I=S^0$ and define the covariant representable
\[
  J=\cC(I,-).
\]
Thus
\[
  J(X)=\cC(S^0,X)=\FreeAb{[S^0,X]_*}
       \cong\FreeAb{\pi_0(X)},
\]
where the free abelian group includes the basepoint component as a
generator.  In particular, $J(X)\cong\Z$ when $X$ is path connected; the
unit is not the constant functor at $\Z$ on all pointed CW complexes.

\begin{proposition}[Day {\cite{Day1970}}]\label{prop:unit}
There are enriched natural isomorphisms
\[
  \lambda^{\star}_F\colon J\Day F\xrightarrow{\ \cong\ }F,
  \qquad
  \rho^{\star}_F\colon F\Day J\xrightarrow{\ \cong\ }F.
\]
\end{proposition}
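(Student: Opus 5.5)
The plan is to reduce everything to the two forms of the co-Yoneda isomorphism \eqref{eq:coyoneda} and \eqref{eq:coyoneda-contra}, combined with Fubini for coends. For the left unitor, I write
\[
 (J\Day F)(X)=\int^{A,B}\cC(A\sm B,X)\otimes\cC(I,A)\otimes F(B).
\]
By Fubini I first integrate over $A$. The functor $P(A)=\cC(A\sm B,X)\otimes F(B)$ is contravariant in $A$, so \eqref{eq:coyoneda-contra} with $Y=I$ collapses the $A$-coend to $\cC(I\sm B,X)\otimes F(B)$, via $[u\otimes v\otimes y]\mapsto u\circ(v\sm 1_B)\otimes y$. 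Precomposing with the inverse unitor $\lambda_B^{-1}\colon B\to I\sm B$ identifies this with $\cC(B,X)\otimes F(B)$. Applying \eqref{eq:coyoneda} then gives $F(X)$. Composing these steps, the explicit formula is
\[
 \lambda^\star_F[e_{[u]}\otimes e_{[v]}\otimes y]_{A,B}=F\bigl(u\circ(v\sm 1_B)\circ\lambda_B^{-1}\bigr)(y),
\]
with inverse $y\mapsto[e_{[\lambda_X]}\otimes e_{[\id_I]}\otimes y]_{I,X}$.

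Since each step is an isomorphism, $\lambda^\star_F$ is an isomorphism. Still, I would check directly that the two formulas are mutually inverse, working with the explicit balancing relations of Section~\ref{sec:coends}.
\begin{itemize}
\item \emph{Inverse followed by $\lambda^\star_F$.} This is $F(\lambda_X\circ\lambda_X^{-1})(y)=y$.
\item \emph{$\lambda^\star_F$ followed by the inverse.} Use the relation in $G=F$ with $g=u\circ(v\sm1)\circ\lambda_B^{-1}\colon B\to X$ to move $F(g)$ back onto the representable slot. This rewrites $[e_{[\lambda_X]}\otimes e_{[\id]}\otimes F(g)y]_{I,X}$ as $[e_{[\lambda_X\circ(1\sm g)]}\otimes e_{[\id]}\otimes y]_{I,B}$. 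Naturality of $\lambda$ turns the first factor into $g\circ\lambda_B=u\circ(v\sm 1)$. The balancing relation for $J$ (with $f=v$, using $J(v)(\id_I)=v$) then returns $[e_{[u]}\otimes e_{[v]}\otimes y]_{A,B}$.
\end{itemize}
Well-definedness of $\lambda^\star_F$ is checked on generators. Each balancing relation is sent to equal elements by functoriality of $F$ and the bifunctoriality of $\sm$ in $\hCW$. Additivity in $u$ and $v$ follows from the linear extension of Proposition~\ref{prop:linearization}.

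Naturality in $X$ is immediate, since $F\Day G$ acts by postcomposition on $u$ and $F$ is a functor. Naturality in $F$ is equally direct: for $\eta\colon F\to F'$, the identity $\eta_X F(g)=F'(g)\eta_B$ is exactly what is needed.

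The right unitor is handled symmetrically. Integrate out $B$ using \eqref{eq:coyoneda-contra}, precompose with $\rho_A^{-1}$, and apply \eqref{eq:coyoneda}, giving $\rho^\star_F[e_{[u]}\otimes y\otimes e_{[v]}]_{A,B}=F(u\circ(1_A\sm v)\circ\rho_A^{-1})(y)$. Alternatively, one can conjugate $\lambda^\star$ by the symmetry induced by $\tau$.

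The main obstacle I expect is bookkeeping rather than substance. The point is that the integration variable sits in the \emph{target} of the representable $\cC(I,A)$, so the contravariant co-Yoneda form must be used. One must also confirm that the balancing relations listed after Definition~\ref{def:day} really generate all of the coend relations. Once these are secured, every verification reduces to naturality of $\lambda$ and $\rho$ in $\hCW$.
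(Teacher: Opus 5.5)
Your proposal is correct and follows the paper's proof: Fubini, then the contravariant co-Yoneda isomorphism \eqref{eq:coyoneda-contra} in the variable carrying $J$, then the unitor of $\cC$, then \eqref{eq:coyoneda}, giving the same explicit formulas and inverses. Your extra generator-level checks are a valid supplement to the paper's sketch, with one small correction: well-definedness on the balancing relation in the $F$-slot also needs naturality of $\lambda$ (namely $\lambda_{B'}^{-1}\circ g=(1_I\sm g)\circ\lambda_B^{-1}$), not only functoriality of $F$ and bifunctoriality of $\sm$.
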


\begin{proof}
By Fubini's theorem we may integrate one variable at a time.  Applying~\eqref{eq:coyoneda-contra} in the variable carrying $J$, then the unitors
$\lambda$ and $\rho$ of $\cC$, and then~\eqref{eq:coyoneda} gives
\begin{align*}
 (J\Day F)(X)
 &\cong\int^B\cC(I\sm B,X)\otimes F(B)
 \cong\int^B\cC(B,X)\otimes F(B)
 \cong F(X),\\
 (F\Day J)(X)
 &\cong\int^A\cC(A\sm I,X)\otimes F(A)
 \cong\int^A\cC(A,X)\otimes F(A)
 \cong F(X).
\end{align*}
Explicitly, if $u\in\cC(A\sm B,X)$, $a\in\cC(I,A)$,
$b\in\cC(I,B)$, and $x\in F(B)$ or $x\in F(A)$ as appropriate, then
\begin{align*}
 \lambda^{\star}_{F,X}[u\otimes a\otimes x]_{A,B}
 &=F\bigl(u\circ(a\sm1_B)\circ\lambda_B^{-1}\bigr)(x),\\
 \rho^{\star}_{F,X}[u\otimes x\otimes b]_{A,B}
 &=F\bigl(u\circ(1_A\sm b)\circ\rho_A^{-1}\bigr)(x).
\end{align*}
The inverses are
\[
 x\longmapsto[\lambda_X\otimes1_I\otimes x]_{I,X},
 \qquad
 x\longmapsto[\rho_X\otimes x\otimes1_I]_{X,I}.
\]
\end{proof}

Fubini's theorem for enriched coends, preservation of colimits by
$\otimes$, and~\eqref{eq:coyoneda-contra} give natural flattening
isomorphisms
\begin{align*}
 ((F\Day G)\Day L)(X)
 &\cong\int^{A,B,D}
 \cC((A\sm B)\sm D,X)\otimes F(A)\otimes G(B)\otimes L(D),\\
 (F\Day(G\Day L))(X)
 &\cong\int^{A,B,D}
 \cC(A\sm(B\sm D),X)\otimes F(A)\otimes G(B)\otimes L(D).
\end{align*}
Precomposition with the inverse
\[
 \alpha^{-1}_{A,B,D}\colon A\sm(B\sm D)\longrightarrow(A\sm B)\sm D
\]
of the associator of $\cC$ therefore defines the convolution associator
\[
 \alpha^{\star}_{F,G,L}\colon(F\Day G)\Day L\xrightarrow{\ \cong\ }
 F\Day(G\Day L).
\]
After flattening, the pentagon and triangle identities reduce to the
corresponding coherence identities in $\cC$.

The convolution symmetry is
\begin{equation}\label{eq:day-symmetry}
 \tau^{\star}_{F,G,X}[u\otimes x\otimes y]_{A,B}
 =[u\circ\tau_{B,A}\otimes y\otimes x]_{B,A},
\end{equation}
where $\tau_{B,A}\colon B\sm A\to A\sm B$ is the twist of $\cC$.
Naturality of $\tau$ proves compatibility with the balancing relations.
Involutivity, the hexagon identities, and compatibility with the unit
reduce, after flattening, to the corresponding identities in $\cC$ and
$\Ab$.

\begin{theorem}[Day {\cite{Day1970}}]\label{thm:day-monoidal}
The tuple
\[
  (\cF,\Day,J,\alpha^{\star},\lambda^{\star},\rho^{\star},\tau^{\star})
\]
is an $\Ab$-enriched symmetric monoidal category.
\end{theorem}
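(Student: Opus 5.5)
The plan is to reduce everything to the corresponding coherence in $\cC$ by passing to flattened multi-variable coends. By Fubini's theorem for enriched coends and the fact that $\otimes_{\Z}$ preserves colimits in each variable, any iterated convolution $F_1\Day\cdots\Day F_k$ evaluated at $X$, with any bracketing $w$, is isomorphic to
\[
 \int^{A_1,\dots,A_k}\cC\bigl(w(A_1,\dots,A_k),X\bigr)\otimes F_1(A_1)\otimes\cdots\otimes F_k(A_k),
\]
where $w(A_1,\dots,A_k)$ is the corresponding bracketed smash product. The internal variables are eliminated using~\eqref{eq:coyoneda-contra}. On elementary representatives, the flattening sends $[u\otimes[v\otimes x\otimes y]_{A,B}\otimes z]_{E,D}$ to $[u\circ(v\sm1_D)\otimes x\otimes y\otimes z]_{A,B,D}$, and similarly for the other bracketing. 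I will first record these formulas. I will also check that $\alpha^\star$, $\lambda^\star$, $\rho^\star$, $\tau^\star$ are well defined and natural, since the balancing relations listed after Definition~\ref{def:day} are preserved by precomposition with the natural isomorphisms $\alpha,\lambda,\rho,\tau$ of $\cC$.

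Next I will verify each axiom on generators. Under flattening, both sides of every coherence diagram become operations of the form $[u\otimes x_1\otimes\cdots\otimes x_k]\mapsto[u\circ c\otimes x_{\sigma(1)}\otimes\cdots]$, where $c$ is a composite of structure maps of $\cC$ and $\sigma$ is a permutation.
\begin{itemize}
\item For the pentagon, the two composites precompose $u\in\cC(((A\sm B)\sm D)\sm E,X)$ with the two composites of the pentagon of $\alpha^{-1}$ in $\cC$, and these agree.
\item For the triangle, I use the explicit formulas for $\lambda^\star$ and $\rho^\star$ from Proposition~\ref{prop:unit}. A representative with $a\in\cC(I,A)$ in the middle slot can be balanced to $a=1_I$, so it suffices to check the triangle for the unitors of $\cC$.
\item For the hexagons, symmetry $\tau^\star\circ\tau^\star=\id$, and the compatibility $\rho^\star\circ\tau^\star=\lambda^\star$, I use~\eqref{eq:day-symmetry}. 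Each reduces to the corresponding identity for $\tau$ in $\cC$ together with the symmetry of $\otimes$ in $\Ab$.
\end{itemize}
Since elementary representatives generate and all maps are additive, agreement on generators suffices. Enrichment, meaning additivity of $\Day$ on hom-groups, follows from the formula $(\eta\Day\theta)_X[u\otimes x\otimes y]=[u\otimes\eta_A(x)\otimes\theta_B(y)]$, which is bilinear in $(\eta,\theta)$.

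The main obstacle is checking that the structure maps, once transported through the flattening isomorphisms, are precisely precomposition with the stated structure maps of $\cC$. For each diagram this requires checking that the flattening isomorphisms are natural with respect to convolution of morphisms in each slot. For example, for $(F\Day G)\Day L$ with $\alpha^\star_{F,G,L}\Day 1_E$ applied, the flattening must intertwine it with precomposition by $\alpha^{-1}\sm1$.

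I would handle this uniformly by using the universal property instead of element chasing. A morphism out of an iterated convolution is determined by the dinatural family it induces on $\cC(w(A_1,\dots,A_k),X)\otimes\bigotimes F_i(A_i)$, evaluated at the unit element $1_{w(A_1,\dots,A_k)}$ via Yoneda. So it suffices to compute each composite on the universal elements $[1\otimes x_1\otimes\cdots\otimes x_k]$. There every step is a single application of a structure map of $\cC$ or of a co-Yoneda identity, and the equalities become the coherence laws of $(\cC,\sm,S^0)$ established in Section~\ref{sec:enrichment}.
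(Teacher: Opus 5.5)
Your proposal is correct and follows the same route as the paper: flatten iterated convolutions via Fubini and co-Yoneda, define $\alpha^\star$ and $\tau^\star$ by precomposition with the structure maps of $\cC$, and reduce the pentagon, triangle, hexagon, involutivity and unit axioms to the corresponding identities in $\cC$ and $\Ab$. Your extra step of testing each composite on the universal elements $[1\otimes x_1\otimes\cdots\otimes x_k]$ is sound, and it fills in the naturality of the flattening isomorphisms (e.g.\ that they intertwine $\alpha^\star\Day 1$ with precomposition by $\alpha^{-1}\sm 1$), which the paper's sketch leaves implicit.
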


\section{Linearized representables}\label{sec:representables}

For $k\geq0$, let $S^k=\mathbb R^k\cup\{\infty\}$ be the one-point
compactification of $\mathbb R^k$, based at $\infty$.  For $n,m\geq0$,
let
\[
  \kappa_{n,m}\colon S^n\sm S^m\xrightarrow{\ \cong\ }S^{n+m}
\]
be the homeomorphism obtained by one-point compactifying the
concatenation $\mathbb R^n\times\mathbb R^m=\mathbb R^{n+m}$,
$(v,w)\mapsto(v,w)$.  The isomorphism of Theorem~\ref{thm:main} exists
for any choice of homeomorphisms $\kappa_{n,m}$, but the sign in
Corollary~\ref{cor:graded-symmetry} depends on this choice.

For $n\geq0$, let
\[
  R_n(X)=\cC(S^n,X)=\FreeAb{[S^n,X]_*}.
\]
This is the free abelian group on the underlying set of homotopy classes.
It should not be confused with $\pi_n(X)$: for $n\geq2$, the comparison
map $q_n$ below is never injective, since it sends the basis element $e_0\neq0$ of
the zero class to $0$.

The following proposition is an instance of the standard fact that the
Yoneda embedding $A\mapsto\cC(A,-)$, from $\cC^{\mathrm{op}}$ to
$(\cF,\Day)$, is strong monoidal~\cite{Day1970}.

\begin{proposition}\label{prop:rep-convolution}
For all $n,m\geq0$ there is a natural isomorphism
\[
  R_n\Day R_m\cong R_{n+m}.
\]
\end{proposition}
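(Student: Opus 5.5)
We compute the defining coend directly, integrating out one variable at a time with the contravariant co-Yoneda isomorphism~\eqref{eq:coyoneda-contra}, and then identify the result using $\kappa_{n,m}$. By Definition~\ref{def:day} and Fubini's theorem,
\[
 (R_n\Day R_m)(X)=\int^{A}\!\int^{B}\cC(A\sm B,X)\otimes\cC(S^n,A)\otimes\cC(S^m,B).
\]
For fixed $A$, the functor $B\mapsto\cC(A\sm B,X)\otimes\cC(S^n,A)$ is contravariant in $B$. Applying~\eqref{eq:coyoneda-contra} in $B$ collapses the inner coend to $\cC(A\sm S^m,X)\otimes\cC(S^n,A)$, via $[u\otimes a\otimes b]_B\mapsto (u\circ(1_A\sm b))\otimes a$. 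A second application of~\eqref{eq:coyoneda-contra}, now in $A$ with $P(A)=\cC(A\sm S^m,X)$, yields $\cC(S^n\sm S^m,X)$. Precomposing with $\kappa_{n,m}^{-1}$ then gives $\cC(S^{n+m},X)=R_{n+m}(X)$.

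The result is the explicit map
\[
 \Phi_X[u\otimes a\otimes b]_{A,B}=u\circ(a\sm b)\circ\kappa_{n,m}^{-1},
\]
extended bilinearly via the enriched smash of morphisms. Its inverse is
\[
 v\longmapsto[v\circ\kappa_{n,m}\otimes1_{S^n}\otimes1_{S^m}]_{S^n,S^m}.
\]
I will also check directly that these maps are inverse to each other, which guards against errors in the Fubini bookkeeping:
\begin{itemize}
\item \emph{$\Phi$ is well defined.} Both balancing relations listed after Definition~\ref{def:day} are respected, since $(f\sm1)\circ(a\sm b)=(f\circ a)\sm b$ by bifunctoriality of $\sm$ on $\cC$.
\item \emph{$\Phi\circ\Phi^{-1}=\id$.} This is immediate.
\item \emph{$\Phi^{-1}\circ\Phi=\id$.} It suffices to check this on elementary representatives with $a=e_{[f]}$ and $b=e_{[g]}$ basis elements. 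Two balancing moves give
\[
[u\otimes f\otimes g]_{A,B}=[u\circ(f\sm g)\otimes1_{S^n}\otimes1_{S^m}]_{S^n,S^m},
\]
which is exactly $\Phi^{-1}\Phi$ of the generator.
\end{itemize}

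Naturality in $X$ is clear, because both $\Phi$ and $\Phi^{-1}$ act by postcomposition on the $u$ (respectively $v$) slot, which is how $(F\Day G)(h)$ acts. There is no serious obstacle. The only point needing care is the variance: the integration variable sits in the target of the representables $\cC(S^n,A)$ and $\cC(S^m,B)$, so one must use~\eqref{eq:coyoneda-contra} rather than~\eqref{eq:coyoneda}. This is precisely the reduction that the direct balancing computation above makes explicit.
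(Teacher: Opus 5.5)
Your proposal is correct and follows the paper's proof: two applications of the contravariant co-Yoneda isomorphism~\eqref{eq:coyoneda-contra}, then precomposition with $\kappa_{n,m}^{-1}$, giving the same explicit map and the same inverse. Your extra direct check that the two formulas are mutually inverse is sound but not spelled out in the paper; it balances a generator onto $(S^n,S^m,1_{S^n},1_{S^m})$, which mirrors the argument the paper later uses in Theorem~\ref{thm:main}.
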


\begin{proof}
Two applications of~\eqref{eq:coyoneda-contra} give
\begin{align*}
 (R_n\Day R_m)(X)
 &=\int^{A,B}\cC(A\sm B,X)\otimes\cC(S^n,A)
      \otimes\cC(S^m,B)\\
 &\cong\cC(S^n\sm S^m,X)
 \cong\cC(S^{n+m},X).
\end{align*}
Explicitly,
\[
 [u\otimes a\otimes b]_{A,B}
 \longmapsto
 u\circ(a\sm b)\circ\kappa_{n,m}^{-1},
\]
the composite being formed in $\cC$; for elements $a$ and $b$ that are not
basis elements, it is extended linearly.  The inverse sends the basis
element of a map $w\colon S^{n+m}\to X$ to
\[
 [e_{[w\circ\kappa_{n,m}]}\otimes e_{[1_{S^n}]}
   \otimes e_{[1_{S^m}]}]_{S^n,S^m}.
\]
\end{proof}

For $n\geq2$ and $x\in\pi_n(X)=[S^n,X]_*$, write $e_x$ for the
corresponding basis element of $R_n(X)$, and define a natural quotient
\[
 q_n\colon R_n\twoheadrightarrow\pi_n,
 \qquad q_{n,X}(e_x)=x.
\]
Its kernel is generated by the elements
\[
 e_{x+y}-e_x-e_y,\qquad x,y\in\pi_n(X).
\]
(The basis element $e_0$ lies in this subgroup, since
$e_{0+0}-e_0-e_0=-e_0$.)  Consequently,
\[
 \pi_n(X)\cong
 \frac{R_n(X)}{\langle e_{x+y}-e_x-e_y\rangle}.
\]

\begin{remark}\label{rem:not-representable}
The functor $\pi_n$ is not representable, nor even a retract of a
representable functor: a retract of $\cC(A,-)$ takes values that are
direct summands of free abelian groups, hence free, whereas
$\pi_n(K(\Z/2,n))=\Z/2$.  Thus Theorem~\ref{thm:main} below is not a
formal consequence of Proposition~\ref{prop:rep-convolution};
Corollary~\ref{cor:square} records how the two are related.
\end{remark}

\section{Convolution of homotopy-group functors}\label{sec:main}

We now compute the Day convolution of the genuine homotopy-group
functors.  The key input is the external smash pairing.

\begin{lemma}\label{lem:external-pairing}
Let $n,m\geq2$.  For pointed CW complexes $A$ and $B$ and classes
$x\in\pi_n(A)$ and $y\in\pi_m(B)$, represented by maps
$a\colon S^n\to A$ and $b\colon S^m\to B$, put
\[
 x\extsm y=
 \left[
 S^{n+m}\xrightarrow{\kappa_{n,m}^{-1}}S^n\sm S^m
 \xrightarrow{a\sm b}A\sm B
 \right]\in\pi_{n+m}(A\sm B).
\]
This defines a map $\pi_n(A)\times\pi_m(B)\to\pi_{n+m}(A\sm B)$ that is
natural in $A$ and $B$ and bilinear.  Hence it induces a natural
homomorphism
\[
 \pi_n(A)\otimes\pi_m(B)\longrightarrow\pi_{n+m}(A\sm B).
\]
\end{lemma}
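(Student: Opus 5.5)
We need three things: the pairing is well defined, it is natural, and it is bilinear. Once bilinearity is known, the induced homomorphism on $\pi_n(A)\otimes\pi_m(B)$ comes from the universal property of the tensor product.

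\emph{Well-definedness.} Let $a\simeq a'$ and $b\simeq b'$ be based homotopic. By Section~\ref{sec:foundations}, the smash product preserves based homotopies, so $a\sm b\simeq a'\sm b'$. Precomposing with the fixed homeomorphism $\kappa_{n,m}^{-1}$ keeps them homotopic, so the class $x\extsm y$ depends only on $x$ and $y$.

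\emph{Naturality.} Let $f\colon A\to A'$ and $g\colon B\to B'$ be based maps. Functoriality of $\sm$ gives
\[
(f\sm g)\circ(a\sm b)=(f\circ a)\sm(g\circ b).
\]
Hence
\[
(f\sm g)_*(x\extsm y)=f_*x\extsm g_*y.
\]
Since $\sm$ is a bifunctor on $\hCW$, this holds already at the level of homotopy classes.

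\emph{Bilinearity.} This is the main step. It suffices to prove additivity in the first variable for fixed $y$; the second variable is handled symmetrically, using $n,m\geq2$ only to know that all groups involved are groups (indeed abelian).

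I would use the standard fact that, for any $Z$, postcomposition $\phi\mapsto(a\sm b)\circ\phi$ is a homomorphism $\pi_{n+m}(S^n\sm S^m)\to\pi_{n+m}(A\sm B)$. This reduces additivity to a statement about the pinch map. Write $S^n=\Sigma S^{n-1}$, using the first coordinate of $\mathbb R^n$ as suspension coordinate. Then $x+x'$ is represented by $(a\vee a')\circ p$, where $p\colon S^n\to S^n\vee S^n$ is the pinch along that coordinate. Smashing gives
\[
((a\vee a')\circ p)\sm b=\bigl((a\sm b)\vee(a'\sm b)\bigr)\circ(p\sm 1_{S^m}),
\]
using the homeomorphism $(S^n\vee S^n)\sm S^m\cong(S^n\sm S^m)\vee(S^n\sm S^m)$, i.e.\ distributivity of $\sm$ over $\vee$. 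The remaining claim is that, under $\kappa_{n,m}$, the map $p\sm1_{S^m}$ is the pinch map of $S^{n+m}$ along its first coordinate.

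This holds on the nose with the concatenation convention: $\kappa_{n,m}$ places the $\mathbb R^n$ coordinates first, so pinching in the first coordinate of $\mathbb R^n$ is pinching in the first coordinate of $\mathbb R^{n+m}$. Composing therefore gives
\[
(x+x')\extsm y=x\extsm y+x'\extsm y,
\]
where the right-hand sum is formed with the pinch along the first coordinate. Since the group structure on $\pi_{n+m}$ is independent of which coordinate is pinched, this is the usual sum.

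For the second variable, the same argument pinches along the $(n+1)$-st coordinate of $\mathbb R^{n+m}$, which again computes the usual sum. The main obstacle is the care needed with these coordinate identifications. If one prefers to avoid the explicit point-set description, the fallback is the Eckmann--Hilton argument: $x\mapsto x\extsm y$ preserves the co-H structure induced from $S^n$, and on $\pi_{n+m}$ any two such co-H sums agree.
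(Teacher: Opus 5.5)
Your argument is correct. Well-definedness and naturality are handled as in the paper; bilinearity is proved by a different route.

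The paper never fixes a point-set model of the pinch. It transports $p_n\sm1$ along $\kappa_{n,m}$ to some map $p'\colon S^d\to S^d\vee S^d$ and notes that both collapses of $p'$ are homotopic to the identity. It then concludes $p'\simeq p_d$, because the Hurewicz isomorphism $\pi_d(S^d\vee S^d)\cong\Z\oplus\Z$ detects a class by the degrees of its two collapses. The identical argument handles $1\sm p_m$.

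You instead fix the pinch along the first coordinate. With the concatenation convention, $p_n\sm1$ is then literally the first-coordinate pinch of $S^{n+m}$, so additivity in $x$ is an on-the-nose identity. In the second variable you land on the pinch along the $(n+1)$-st coordinate. There you invoke the standard fact that the sum on $\pi_d$ does not depend on the pinching coordinate; this follows, e.g., from conjugating by a rotation in $SO(d)$, which is based-homotopic to the identity.

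Your version is more elementary and avoids Hurewicz entirely. However, it is tied to the specific $\kappa_{n,m}$ and to a pinch model compatible with it. It also treats the two variables asymmetrically, and in the second variable it cites a fact that is a special case of exactly what the paper proves.

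The paper's argument is uniform in both variables and does not depend on the choice of homeomorphism $\kappa_{n,m}$ or of pinch model. This is what supports its remark that Theorem~\ref{thm:main} holds for any choice of $\kappa_{n,m}$.
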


\begin{proof}
The class $x\extsm y$ is independent of the representatives, and natural
in $A$ and $B$, because the smash product preserves based homotopies.

Let $p_n\colon S^n\to S^n\vee S^n$ be the pinch map defining addition in
$\pi_n$, and let $c_1,c_2\colon S^n\vee S^n\to S^n$ be the two collapse
maps, so that $c_j\circ p_n\simeq1$.  Put $d=n+m$.  Use $\kappa_{n,m}$
to identify $S^n\sm S^m$ with $S^d$, and hence
\[
 (S^n\vee S^n)\sm S^m=(S^n\sm S^m)\vee(S^n\sm S^m)
\]
with $S^d\vee S^d$.  Under these identifications $p_n\sm1$ becomes a map
$p'\colon S^d\to S^d\vee S^d$, and its composite with the $j$-th collapse
map is $(c_j\circ p_n)\sm1\simeq1$, so it has degree one.  Since
$S^d\vee S^d$ is $(d-1)$-connected and $d\geq2$, the Hurewicz theorem~\cite[Theorem~4.32]{Hatcher} gives
\[
 \pi_d(S^d\vee S^d)\cong H_d(S^d\vee S^d)\cong\Z\oplus\Z,
\]
and the two coordinates of the class of a map $S^d\to S^d\vee S^d$ are
the degrees of its composites with the collapse maps.  Hence $p'$ is
homotopic to the pinch map $p_d$ on $S^d$.  If $a$ and $a'$ represent
$x,x'\in\pi_n(A)$, then $a+a'$ is represented by
$\nabla\circ(a\vee a')\circ p_n$, so
\[
 (a+a')\sm b\simeq
 \nabla\circ\bigl((a\sm b)\vee(a'\sm b)\bigr)\circ(p_n\sm1).
\]
Precomposing with $\kappa_{n,m}^{-1}$ and using $p'\simeq p_d$ gives
\[
 (x+x')\extsm y
 =x\extsm y+x'\extsm y.
\]
The same argument applied to $1\sm p_m$ proves additivity in the second
variable.
\end{proof}

\begin{theorem}\label{thm:main}
For $n,m\geq2$, there is an isomorphism in $\cF$
\[
  \mu_{n,m}\colon\pi_n\Day\pi_m\xrightarrow{\ \cong\ }\pi_{n+m}.
\]
It is given on elementary representatives by
\begin{equation}\label{eq:mu}
 \mu_{n,m,X}[e_{[u]}\otimes x\otimes y]_{A,B}
 =u_*(x\extsm y).
\end{equation}
\end{theorem}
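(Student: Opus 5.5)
We first check that \eqref{eq:mu} defines a morphism in $\cF$, and then construct an explicit inverse.

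\emph{Step 1: $\mu_{n,m}$ is a well-defined natural transformation.} The assignment $(u,x,y)\mapsto u_*(x\extsm y)$, extended linearly in $u\in\cC(A\sm B,X)$, is additive in $x$ and $y$ by Lemma~\ref{lem:external-pairing}. So it defines a homomorphism on $\bigoplus_{A,B}\cC(A\sm B,X)\otimes\pi_n(A)\otimes\pi_m(B)$. It respects the two balancing relations of Section~\ref{sec:coends} by naturality of $\extsm$: $(f_*x)\extsm y=(f\sm 1_B)_*(x\extsm y)$, hence $u_*(f_*x\extsm y)=(u\circ(f\sm1_B))_*(x\extsm y)$, and similarly in the second variable. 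Naturality in $X$ is immediate, because $(F\Day G)(h)$ postcomposes $u$ with $h$ and $h_*u_*=(h\circ u)_*$.

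\emph{Step 2: the inverse.} Define $\nu_X\colon\pi_{n+m}(X)\to(\pi_n\Day\pi_m)(X)$ by
\[
 \nu_X(w)=[e_{[w\circ\kappa_{n,m}]}\otimes\iota_n\otimes\iota_m]_{S^n,S^m},
\]
where $w\colon S^{n+m}\to X$ represents the class. This is the only step with real content: we must show $\nu_X$ is a homomorphism, i.e.\ $\nu_X(w+w')=\nu_X(w)+\nu_X(w')$. The plan is to realize the sum by a map out of a wedge and move the pinch map across the coend. Write $w+w'=\nabla\circ(w\vee w')\circ p_{n+m}$. By the argument in the proof of Lemma~\ref{lem:external-pairing}, $p_{n+m}\circ\kappa_{n,m}\simeq (p_n\sm 1)$ up to the identification $(S^n\vee S^n)\sm S^m\cong (S^n\sm S^m)\vee(S^n\sm S^m)$. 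Hence
\[
 (w+w')\circ\kappa_{n,m}\simeq v\circ(p_n\sm1_{S^m}),
\]
where $v\colon(S^n\vee S^n)\sm S^m\to X$ restricts to $w\kappa$ and $w'\kappa$ on the two summands. Balancing in the first variable with $f=p_n$ gives
\[
 \nu_X(w+w')=[e_{[v]}\otimes (p_n)_*\iota_n\otimes\iota_m]_{S^n\vee S^n,S^m}.
\]
In $\pi_n(S^n\vee S^n)$ we have $(p_n)_*\iota_n=(i_1)_*\iota_n+(i_2)_*\iota_n$, where $i_1,i_2$ are the two inclusions. Additivity of the class in the middle slot then splits the right-hand side into two terms. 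Balancing each term back along $i_j$ yields $[e_{[v\circ(i_j\sm1)]}\otimes\iota_n\otimes\iota_m]$, which equals $\nu_X(w)$ for $j=1$ and $\nu_X(w')$ for $j=2$. Naturality of $\nu$ in $X$ is clear.

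\emph{Step 3: the two composites are identities.} First, $\mu\nu(w)=(w\kappa)_*(\iota_n\extsm\iota_m)=w\kappa\kappa^{-1}=w$. For the other composite, it suffices to check elementary representatives. Write $x=a_*\iota_n$ and $y=b_*\iota_m$ with $a\colon S^n\to A$ and $b\colon S^m\to B$. Balancing gives
\[
 [e_{[u]}\otimes x\otimes y]_{A,B}=[e_{[u\circ(a\sm b)]}\otimes\iota_n\otimes\iota_m]_{S^n,S^m}=\nu_X\bigl(u\circ(a\sm b)\circ\kappa^{-1}\bigr)=\nu_X\mu_X[e_{[u]}\otimes x\otimes y]_{A,B}.
\]
Since these representatives generate $(\pi_n\Day\pi_m)(X)$, $\nu\mu=\id$. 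This surjectivity-onto-the-universal-example argument is routine. The main obstacle is Step 2, specifically keeping track of the wedge identification and the homotopy $p_{n+m}\kappa\simeq p_n\sm1$, which we take from Lemma~\ref{lem:external-pairing}; this uses $n+m\geq2$.
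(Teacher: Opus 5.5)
Your proof is correct and follows the paper's argument essentially step for step. You use the same balanced map defining $\mu_{n,m}$, the same inverse $\nu_{n,m}$ built from the universal example $(S^n,S^m,\iota_n,\iota_m)$, additivity of $\nu_{n,m}$ via a map out of $(S^n\vee S^n)\sm S^m$ with balancing along $p_n$ and along $i_1,i_2$, and the same balancing argument for $\nu_{n,m}\mu_{n,m}=\id$. The only cosmetic difference is that you import the homotopy $p_{n+m}\circ\kappa_{n,m}\simeq p_n\sm1$ (up to the wedge identification) from the proof of Lemma~\ref{lem:external-pairing}, whereas the paper gets the same identity $[v\circ(p_n\sm1)\circ\kappa_{n,m}^{-1}]=[w]+[w']$ from the lemma's statement, applying bilinearity to $[p_n]=[i_1]+[i_2]$.
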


\begin{proof}
For $A,B,X\in\cC$, define
\[
 \Theta_{A,B,X}\colon\cC(A\sm B,X)\otimes\pi_n(A)\otimes\pi_m(B)
 \longrightarrow\pi_{n+m}(X)
\]
on generators by
\[
 \Theta_{A,B,X}(e_{[u]}\otimes x\otimes y)
 =u_*(x\extsm y).
\]
This is well defined: the elements $e_{[u]}$ form a basis of
$\cC(A\sm B,X)$, and for fixed $u$ the right-hand side is additive in $x$
and in $y$ by Lemma~\ref{lem:external-pairing}, because $u_*$ is a
homomorphism.  If $f\colon A\to A'$ is a map, naturality of the external
pairing gives
\begin{align*}
 \Theta_{A',B,X}(e_{[u]}\otimes f_*(x)\otimes y)
 &=u_*\bigl(f_*(x)\extsm y\bigr)\\
 &=u_*(f\sm1_B)_*(x\extsm y)\\
 &=\Theta_{A,B,X}
   (e_{[u\circ(f\sm1_B)]}\otimes x\otimes y).
\end{align*}
The same argument in the $B$-variable gives the second family of
balancing relations.  By the description of the relations in
Section~\ref{sec:coends}, it suffices to check them for maps $f$ and $g$,
so $\Theta$ is balanced.  The universal property of the coend therefore
yields the natural transformation $\mu_{n,m}$ in~\eqref{eq:mu}.

It remains to construct its inverse.  Let
\[
 \iota_n=[1_{S^n}]\in\pi_n(S^n),
 \qquad
 \iota_m=[1_{S^m}]\in\pi_m(S^m).
\]
Given $\gamma\in\pi_{n+m}(X)$, represented by a map
$w\colon S^{n+m}\to X$, put
\[
 \bar\gamma=w\circ\kappa_{n,m}\colon S^n\sm S^m\longrightarrow X
\]
and define
\begin{equation}\label{eq:nu}
 \nu_{n,m,X}(\gamma)=
 [e_{[\bar\gamma]}\otimes\iota_n\otimes\iota_m]_{S^n,S^m}.
\end{equation}

We verify additivity.  Given $\gamma,\delta\in\pi_{n+m}(X)$, let
$A=S^n\vee S^n$, let $i_1,i_2\colon S^n\to A$ be the two inclusions, and
choose
\[
 u\colon A\sm S^m\cong(S^n\sm S^m)\vee(S^n\sm S^m)\longrightarrow X
\]
whose restrictions represent $\bar\gamma$ and $\bar\delta$; that is,
$u\circ(i_1\sm1)$ represents $\bar\gamma$ and $u\circ(i_2\sm1)$
represents $\bar\delta$.  In $\pi_n(A)$ one has
\[
 [p_n]=[i_1]+[i_2].
\]
Linearity in the middle factor, followed by balancing along $i_1$ and
$i_2$, gives
\begin{align*}
 [e_{[u]}\otimes[p_n]\otimes\iota_m]_{A,S^m}
 &= [e_{[u]}\otimes[i_1]\otimes\iota_m]_{A,S^m}
   +[e_{[u]}\otimes[i_2]\otimes\iota_m]_{A,S^m}\\
 &=\nu_{n,m,X}(\gamma)+\nu_{n,m,X}(\delta).
\end{align*}
On the other hand, balancing along $p_n$ gives
\[
 [e_{[u]}\otimes[p_n]\otimes\iota_m]_{A,S^m}
 =[e_{[u\circ(p_n\sm1)]}\otimes\iota_n\otimes\iota_m]_{S^n,S^m}.
\]
By Lemma~\ref{lem:external-pairing},
\[
 \bigl[u\circ(p_n\sm1)\circ\kappa_{n,m}^{-1}\bigr]
 =u_*([p_n]\extsm\iota_m)
 =u_*([i_1]\extsm\iota_m)+u_*([i_2]\extsm\iota_m)
 =\gamma+\delta,
\]
so $u\circ(p_n\sm1)$ represents $\overline{\gamma+\delta}$, and the
right-hand side above is $\nu_{n,m,X}(\gamma+\delta)$.  Hence
$\nu_{n,m,X}$ is a homomorphism; it is plainly natural in $X$.

The definitions give
\[
 \mu_{n,m,X}\nu_{n,m,X}(\gamma)=\gamma.
\]
Conversely, if
$[e_{[u]}\otimes x\otimes y]_{A,B}$ is an elementary
representative, choose maps $a\colon S^n\to A$ and $b\colon S^m\to B$
representing $x$ and $y$.  Since
\[
 a_*(\iota_n)=x,
 \qquad b_*(\iota_m)=y,
\]
balancing first in $A$ and then in $B$ gives
\begin{align*}
 [e_{[u]}\otimes x\otimes y]_{A,B}
 &= [e_{[u\circ(a\sm1_B)]}\otimes\iota_n\otimes y]_{S^n,B}\\
 &= [e_{[u\circ(a\sm b)]}\otimes\iota_n\otimes\iota_m]
    _{S^n,S^m}.
\end{align*}
Since $\mu_{n,m,X}[e_{[u]}\otimes x\otimes y]_{A,B}=u_*(x\extsm y)$ is
represented by $u\circ(a\sm b)\circ\kappa_{n,m}^{-1}$, this is exactly
$\nu_{n,m,X}\mu_{n,m,X}
[e_{[u]}\otimes x\otimes y]_{A,B}$.
Elementary representatives generate $(\pi_n\Day\pi_m)(X)$, so
$\mu_{n,m}$ and $\nu_{n,m}$ are inverse natural transformations.
\end{proof}

The isomorphism $\mu_{n,m}$ is determined by the representable case.

\begin{corollary}\label{cor:square}
For $n,m\geq2$, the square
\[
\begin{CD}
 R_n\Day R_m @>{\cong}>> R_{n+m}\\
 @V{q_n\Day q_m}VV @VV{q_{n+m}}V\\
 \pi_n\Day\pi_m @>{\mu_{n,m}}>> \pi_{n+m}
\end{CD}
\]
commutes, where the top arrow is the isomorphism of
Proposition~\ref{prop:rep-convolution}, and $q_n\Day q_m$ is an
epimorphism.  Hence $\mu_{n,m}$ is the unique morphism making the square
commute.
\end{corollary}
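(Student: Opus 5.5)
The plan is to prove three things in order: the square commutes, $q_n\Day q_m$ is an epimorphism, and uniqueness follows. Throughout I work on elementary representatives, using the explicit formulas already recorded.

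\emph{Commutativity.} Since both composites are natural transformations out of $R_n\Day R_m$, and $(R_n\Day R_m)(X)$ is generated by the classes $[e_{[u]}\otimes e_{[a]}\otimes e_{[b]}]_{A,B}$ with $u\colon A\sm B\to X$, $a\colon S^n\to A$, $b\colon S^m\to B$ maps, it suffices to compare the two composites on these generators.
\begin{itemize}
\item Going right then down: Proposition~\ref{prop:rep-convolution} sends such a class to $e_{[u\circ(a\sm b)\circ\kappa_{n,m}^{-1}]}$. Then $q_{n+m}$ sends this to the homotopy class $[u\circ(a\sm b)\circ\kappa_{n,m}^{-1}]\in\pi_{n+m}(X)$.
\item Going down then right: by the formula for $\eta\Day\theta$ in Section~\ref{sec:coends}, $q_n\Day q_m$ sends the class to $[e_{[u]}\otimes[a]\otimes[b]]_{A,B}$. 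Then \eqref{eq:mu} gives $u_*([a]\extsm[b])$.
\end{itemize}
By Lemma~\ref{lem:external-pairing}, $u_*([a]\extsm[b])$ is represented by $u\circ(a\sm b)\circ\kappa_{n,m}^{-1}$, so the two composites agree.

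\emph{Epimorphism.} The cleanest route is to check that each component $(q_n\Day q_m)_X$ is surjective. Epimorphisms in $\cF$ detected objectwise suffice here, since a componentwise surjection is certainly an epimorphism in the functor category. $(\pi_n\Day\pi_m)(X)$ is generated by the elementary representatives $[e_{[u]}\otimes x\otimes y]_{A,B}$. Choosing maps $a,b$ that represent $x,y$, the formula above exhibits each of these as the image of $[e_{[u]}\otimes e_{[a]}\otimes e_{[b]}]_{A,B}$, because $q_{n,A}(e_{[a]})=x$ and $q_{m,B}(e_{[b]})=y$. Hence the component is surjective. Alternatively, one could note that Day convolution preserves colimits in each variable, so it preserves epimorphisms (cokernels) in each variable, and $q_n\Day q_m=(q_n\Day1)\circ(1\Day q_m)$. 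The direct surjectivity argument avoids having to justify that cocontinuity claim.

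\emph{Uniqueness.} If $\mu'$ also makes the square commute, then $\mu'\circ(q_n\Day q_m)=q_{n+m}\circ(\text{top})=\mu_{n,m}\circ(q_n\Day q_m)$. Cancelling the epimorphism gives $\mu'=\mu_{n,m}$.

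I expect no real obstacle. The only care needed is in the commutativity step, where one must use the specific explicit formula of Proposition~\ref{prop:rep-convolution} (with $\kappa_{n,m}^{-1}$), so that it matches the definition of $\extsm$ in Lemma~\ref{lem:external-pairing}.
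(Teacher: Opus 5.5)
Your proposal is correct and follows the paper's proof essentially step for step: you check commutativity on the generators $[e_{[u]}\otimes e_{[a]}\otimes e_{[b]}]_{A,B}$, prove $q_n\Day q_m$ is surjective at each $X$ by lifting every elementary representative, and get uniqueness by cancelling the epimorphism. Your only addition is spelling out that cancellation step, which the paper leaves implicit.
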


\begin{proof}
Let $u\colon A\sm B\to X$, $a\colon S^n\to A$ and $b\colon S^m\to B$ be
maps.  Along the top and right, the element
$[e_{[u]}\otimes e_{[a]}\otimes e_{[b]}]_{A,B}$ maps to
$e_{[u\circ(a\sm b)\circ\kappa_{n,m}^{-1}]}$ and then to
$[u\circ(a\sm b)\circ\kappa_{n,m}^{-1}]=u_*([a]\extsm[b])$.  Along the
left and bottom, it maps to $[e_{[u]}\otimes[a]\otimes[b]]_{A,B}$ and
then to $u_*([a]\extsm[b])$.  Elements of this form generate
$(R_n\Day R_m)(X)$, so the square commutes.  Every elementary
representative $[e_{[u]}\otimes x\otimes y]_{A,B}$ of
$(\pi_n\Day\pi_m)(X)$ is the image of
$[e_{[u]}\otimes e_x\otimes e_y]_{A,B}$, so $q_n\Day q_m$ is surjective
at every $X$, hence an epimorphism in $\cF$.
\end{proof}

In other words, the isomorphism of
Proposition~\ref{prop:rep-convolution} descends to the quotients, and
Theorem~\ref{thm:main} says that the induced map is again an
isomorphism.

\section{Graded symmetry}\label{sec:symmetry}

The twist $\tau_{S^n,S^m}\colon S^n\sm S^m\to S^m\sm S^n$ has a degree
only after both ends are identified with $S^{n+m}$, so consider the
self-map
\[
 t_{n,m}=\kappa_{m,n}\circ\tau_{S^n,S^m}\circ\kappa_{n,m}^{-1}
 \colon S^{n+m}\longrightarrow S^{n+m}.
\]
With $\kappa$ as in Section~\ref{sec:representables}, $t_{n,m}$ is the
one-point compactification of the block permutation
\[
 \mathbb R^n\times\mathbb R^m
 \longrightarrow
 \mathbb R^m\times\mathbb R^n,
 \qquad (v,w)\longmapsto(w,v),
\]
regarded as a linear automorphism of $\mathbb R^{n+m}$.  The one-point
compactification of a linear automorphism of $\mathbb R^d$ has degree
equal to the sign of its determinant, so $t_{n,m}$ has degree
$(-1)^{nm}$.

Precomposition with a self-map $t$ of $S^d$ of degree $k$ multiplies
$\pi_d(X)$ by $k$.  Indeed, $\pi_d(S^d)\cong\Z$ via degree~\cite[Corollary~4.25]{Hatcher}, so $[t]=k\iota_d$; if $w$ represents
$\gamma\in\pi_d(X)$, then
\[
 [w\circ t]=w_*[t]=w_*(k\iota_d)=k\gamma,
\]
because $w_*$ is a homomorphism.

\begin{corollary}\label{cor:graded-symmetry}
For $n,m\geq2$, the diagram
\[
\begin{CD}
 \pi_n\Day\pi_m @>{\tau^{\star}_{\pi_n,\pi_m}}>> \pi_m\Day\pi_n\\
 @V{\mu_{n,m}}V{\cong}V @VV{\mu_{m,n}}V\\
 \pi_{n+m} @>>{(-1)^{nm}}> \pi_{n+m}
\end{CD}
\]
commutes.  Thus Day symmetry recovers the usual graded sign.
\end{corollary}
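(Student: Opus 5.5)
Since the elementary representatives $[e_{[u]}\otimes x\otimes y]_{A,B}$ generate $(\pi_n\Day\pi_m)(X)$, and all maps in the square are homomorphisms, it suffices to check commutativity on one such generator. Fix a map $u\colon A\sm B\to X$, and choose representatives $a\colon S^n\to A$ of $x$ and $b\colon S^m\to B$ of $y$. Following the top and right of the square, \eqref{eq:day-symmetry} sends the generator to $[e_{[u\circ\tau_{B,A}]}\otimes y\otimes x]_{B,A}$. By~\eqref{eq:mu}, $\mu_{m,n}$ then sends this to $(u\circ\tau_{B,A})_*(y\extsm x)$. By Lemma~\ref{lem:external-pairing}, this class is represented by
\[
 u\circ\tau_{B,A}\circ(b\sm a)\circ\kappa_{m,n}^{-1}.
\]
Following the left and bottom instead, we obtain $(-1)^{nm}u_*(x\extsm y)$, where $u_*(x\extsm y)$ is represented by $u\circ(a\sm b)\circ\kappa_{n,m}^{-1}$.

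To compare the two, we use naturality of the twist in $\hCW$:
\[
 \tau_{B,A}\circ(b\sm a)=(a\sm b)\circ\tau_{S^m,S^n}.
\]
Substituting, the first representative becomes
\[
 u\circ(a\sm b)\circ\tau_{S^m,S^n}\circ\kappa_{m,n}^{-1}
 =\bigl(u\circ(a\sm b)\circ\kappa_{n,m}^{-1}\bigr)\circ
 \bigl(\kappa_{n,m}\circ\tau_{S^m,S^n}\circ\kappa_{m,n}^{-1}\bigr).
\]
The second bracket is $t_{m,n}$, because $\tau_{S^m,S^n}=\tau_{S^n,S^m}^{-1}$ and hence $t_{m,n}=t_{n,m}^{-1}$. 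Alternatively, one can note directly that $t_{m,n}$ is the compactified block swap $\mathbb R^m\times\mathbb R^n\to\mathbb R^n\times\mathbb R^m$. Either way, $t_{m,n}$ has degree $(-1)^{nm}$, as computed just before the statement.

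Finally, we apply the fact recorded before the statement: precomposition with a self-map of $S^{n+m}$ of degree $k$ multiplies $\pi_{n+m}(X)$ by $k$. Here the map being precomposed is the representative $w=u\circ(a\sm b)\circ\kappa_{n,m}^{-1}$ of $u_*(x\extsm y)$, and the self-map is $t_{m,n}$. Hence the top–right path yields $(-1)^{nm}u_*(x\extsm y)$, which equals the value along the left–bottom path.

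The main point requiring care is the bookkeeping of which twist appears, $\tau_{B,A}$ or $\tau_{A,B}$, and the direction of the $\kappa$'s. The sign itself is insensitive to inversion, since $(-1)^{nm}=(-1)^{mn}$ and $\deg(t^{-1})=\deg t$. So once the naturality square for $\tau$ is written down correctly, the argument is routine.
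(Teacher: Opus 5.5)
Your proof is correct and follows essentially the same route as the paper: check on elementary representatives, rewrite the top--right representative as $u\circ(a\sm b)\circ\kappa_{n,m}^{-1}\circ t_{m,n}$ via naturality of the twist, then apply the degree-$(-1)^{nm}$ computation and the precomposition fact. One small point: the second bracket is $t_{m,n}$ directly by definition; the identity $t_{m,n}=t_{n,m}^{-1}$ is true but not needed to identify it.
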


\begin{proof}
Let $u\colon A\sm B\to X$ be a map, and let $a\colon S^n\to A$ and
$b\colon S^m\to B$ represent $x\in\pi_n(A)$ and $y\in\pi_m(B)$.  By~\eqref{eq:day-symmetry} and~\eqref{eq:mu},
\begin{align*}
 \mu_{m,n}\tau^{\star}_{\pi_n,\pi_m}[e_{[u]}\otimes x\otimes y]_{A,B}
 &=\mu_{m,n}[e_{[u\circ\tau_{B,A}]}\otimes y\otimes x]_{B,A}\\
 &=\bigl[u\circ\tau_{B,A}\circ(b\sm a)\circ\kappa_{m,n}^{-1}\bigr].
\end{align*}
By naturality of the twist,
$\tau_{B,A}\circ(b\sm a)=(a\sm b)\circ\tau_{S^m,S^n}$, so this class is
represented by
\[
 u\circ(a\sm b)\circ\kappa_{n,m}^{-1}\circ t_{m,n}.
\]
This is the precomposite of a representative of
$\mu_{n,m}[e_{[u]}\otimes x\otimes y]_{A,B}=u_*(x\extsm y)$ with
$t_{m,n}$, which has degree $(-1)^{nm}$.  Hence
$\mu_{m,n}\tau^{\star}_{\pi_n,\pi_m}=(-1)^{nm}\mu_{n,m}$ on elementary
representatives, which generate.
\end{proof}

\begin{remark}\label{rem:pointwise}
Theorem~\ref{thm:main} is a functor-level statement.  It does not, by
itself, produce a pointwise multiplication
\[
  \pi_n(X)\otimes\pi_m(X)\longrightarrow\pi_{n+m}(X)
\]
for an arbitrary pointed space $X$.  Such a multiplication requires
additional structure, for example a map $X\sm X\to X$.
\end{remark}

\section*{Acknowledgments}

I thank Vasudev Shyam for introducing me to Day convolution; David Ayala
for reading this paper and for his encouraging comments on the result;
and Rahul K. Balaji for discussions on the topic and for tremendous help
with the structure of the project.  I also thank Dikshith Poojary for his
time and effort in discussing functors with me; Bhoris Dhanjal for
insightful discussions at the blackboards of the International Centre for
Theoretical Sciences (ICTS), Bengaluru; and Skanda Shivakumar for sharing
insights on the project.


\begin{thebibliography}{99}

\bibitem{Day1970}
B. Day,
\emph{On closed categories of functors},
Reports of the Midwest Category Seminar IV,
Lecture Notes in Mathematics, vol. 137,
Springer, Berlin, 1970, pp. 1--38,
\href{https://doi.org/10.1007/BFb0060438}{\nolinkurl{doi:10.1007/BFb0060438}}.

\bibitem{Day1971}
B. Day,
\emph{Construction of biclosed categories},
Abstract of Ph.D. thesis,
Bull. Austral. Math. Soc. \textbf{5} (1971), no. 1, 139--140,
\href{https://doi.org/10.1017/S0004972700046979}{\nolinkurl{doi:10.1017/S0004972700046979}}.

\bibitem{Hatcher}
A. Hatcher,
\emph{Algebraic Topology},
Cambridge University Press, Cambridge, 2002.

\bibitem{Kelly}
G. M. Kelly,
\emph{Basic Concepts of Enriched Category Theory},
London Mathematical Society Lecture Note Series, vol. 64,
Cambridge University Press, Cambridge, 1982;
reprinted in Repr. Theory Appl. Categ. \textbf{10} (2005), 1--136.

\bibitem{Loregian}
F. Loregian,
\emph{(Co)end Calculus},
London Mathematical Society Lecture Note Series, vol. 468,
Cambridge University Press, Cambridge, 2021,
\href{https://doi.org/10.1017/9781108778657}{\nolinkurl{doi:10.1017/9781108778657}};
\href{https://arxiv.org/abs/1501.02503}{\nolinkurl{arXiv:1501.02503}}.

\bibitem{MMSS}
M. A. Mandell, J. P. May, S. Schwede, and B. Shipley,
\emph{Model categories of diagram spectra},
Proc. London Math. Soc. (3) \textbf{82} (2001), no. 2, 441--512,
\href{https://doi.org/10.1112/S0024611501012692}{\nolinkurl{doi:10.1112/S0024611501012692}}.

\bibitem{MayConcise}
J. P. May,
\emph{A Concise Course in Algebraic Topology},
Chicago Lectures in Mathematics,
University of Chicago Press, Chicago, 1999.

\end{thebibliography}
\end{document}